\newcommand{\Av}{\ensuremath{\mathrm{Av}}}
\newcommand{\D}[1]{\ensuremath{\mathrm{\textbf{D}}}}
\renewcommand{\P}{\ensuremath{\mathcal{P}}}
\newcommand{\Q}{\ensuremath{\mathcal{Q}}}
\newcommand{\DD}{\ensuremath{\mathcal{D}}}
\newcommand{\fL}{\ensuremath{\mathsf{L}}}
\newcommand{\fR}{\ensuremath{\mathsf{R}}}
\newcommand{\fD}{\ensuremath{\mathsf{D}}}
\newcommand{\fU}{\ensuremath{\mathsf{U}}}
\newcommand{\fC}{\ensuremath{\mathsf{C}}}
\newcommand{\fB}{\ensuremath{\mathsf{B}}}
\newcommand{\fS}{\ensuremath{\mathsf{S}}}
\newcommand{\fM}{\ensuremath{\mathsf{M}}}
\newcommand{\fX}{\ensuremath{\mathsf{X}}}
\newcommand{\bD}{\ensuremath{\textbf{\textsf{D}}}}
\newcommand{\bU}{\ensuremath{\textbf{\textsf{U}}}}
\newcommand{\bC}{\ensuremath{\textbf{\textsf{C}}}}
\newcommand{\bE}{\ensuremath{\textbf{\textsf{E}}}}
\newcommand{\bc}{\ensuremath{\mathbf{c}}}
\newcommand{\be}{\ensuremath{\mathbf{e}}}
\definecolor{keywords}{RGB}{255,0,90}
\definecolor{comments}{RGB}{0,0,113}
\definecolor{myred}{RGB}{160,0,0}
\definecolor{green}{RGB}{0,150,0}
\newcommand\dyck[4]{
  \fill[white]  (#1) rectangle +(#2,#2);
  \draw[help lines] (#1) grid +(#2,#2);
  \coordinate (prev) at (#1);
  \foreach \dir in {#4}{
    \ifnum\dir=0
    \coordinate (dep) at (1,0);
    \else
    \coordinate (dep) at (0,1);
    \fi
    \draw[line width=2pt, color=#3, cap=round] (prev) -- ++(dep) coordinate (prev);
  };
}
\newcommand\pdyck[3]{
  \coordinate (prev) at (#1);
  \foreach \dir in {#3}{
    \ifnum\dir=0
    \coordinate (dep) at (1,0);
    \else
    \coordinate (dep) at (0,1);
    \fi
    \draw[line width=2pt, color=#2, cap=round, style=dotted] (prev) -- ++(dep) coordinate (prev);
  };
}
\newtheorem{theorem}{Theorem}
\newtheorem{lemma}{Lemma}
\theoremstyle{definition}
\newtheorem{example}{Example}
\begin{document}

\newsavebox{\smlmata}
\savebox{\smlmata}{$\left(\begin{smallmatrix} \emptyset & \Av(12)\\ \Av(21) & \emptyset \end{smallmatrix}\right)$}
\newsavebox{\smlmatb}
\savebox{\smlmatb}{$\left(\begin{smallmatrix}\Av(21) & \Av(12) \end{smallmatrix}\right)$}

\title{Juxtaposing Catalan permutation classes with monotone ones}
\author{R. Brignall\thanks{School of Mathematics and Statistics, The Open University, Milton Keynes, MK7 6AA, UK}\\ \small{\texttt{rbrignall@gmail.com}} \and J. Slia\v{c}an\footnotemark[1]\\ \small{\texttt{jakub.sliacan@gmail.com}}}
\date{}

\maketitle

\begin{abstract}
This paper enumerates all juxtaposition classes of the form ``$\Av(abc)$ next to $\Av(xy)$'', where $abc$ is a permutation of length three and $xy$ is a permutation of length two. We use Dyck paths decorated by sequences of points to represent elements from such a juxtaposition class. Context free grammars are then used to enumerate these decorated Dyck paths.
\end{abstract}

\setcounter{tocdepth}{1}

\section{Introduction}
Juxtapositions are a simple special case of permutation grid classes. Grid classes found application mainly as tools to study the structure of other permutation classes. In~\cite{vatter11small}, Vatter used the structural insights that monotone grid classes offer to classify growth rates of small permutation classes. Apart from enumerating permutation classes, several other applications of grid classes exist, among them~\cite{albert2011enumeration, aabrv2013, bevan-new, brignall2012pwo, murphy2003pwo, vatter2011pwo}. For an introduction to grid classes and their further uses, see Bevan's PhD thesis~\cite{bevan2015thesis}.

\begin{figure}[!ht]
\begin{center}
\captionsetup{singlelinecheck=off}
  \begin{tikzpicture}[baseline=-0ex,scale=0.5]
  \tikzstyle{vertex}=[circle,fill=black, minimum size=4pt,inner sep=1pt]
  \node[vertex] (v1) at (0.5, 0.5){};
  \node[vertex] (v2) at (1.5,1.5){};
  \node[vertex] (v3) at (2.5, 3.5){};
  \node[vertex] (v4) at (3.5, 2.5){};
  \draw[gray, very thin] (0,0) grid (4,4);
  \draw[thick] (2,-0.5)--(2,4.5);
  \draw[thick] (-0.5,2)--(4.5,2);
  \draw (v1) (v2) (v3) (v4);

  \node[vertex] (v5) at (6.5, 0.5){};
  \node[vertex] (v6) at (7.5, 1.5){};
  \node[vertex] (v7) at (8.5, 3.5){};
  \node[vertex] (v8) at (9.5, 2.5){};
  \draw[gray, very thin] (6,0) grid (10,4);
  \draw[thick, dashed] (8,-0.5)--(8,4.5);
  \draw[thick] (9,-0.5)--(9,4.5);
  \draw (v5) (v6) (v7) (v8);
  \end{tikzpicture}
\caption{\small On the left is the unique gridding of 1243 by the gridding matrix $M =$~\usebox{\smlmata}. On the right are the two griddings of 1243 by $M =$~\usebox{\smlmatb}.}
\label{fig:gridexample}
\end{center}
\end{figure}

Each permutation in a \emph{grid class} can be drawn into a grid so that the subpermutation in each box is in the class specified by the corresponding cell in a \emph{gridding matrix}. See Figure~\ref{fig:gridexample}  for an example of permutations from a \emph{monotone grid class} (where each cell in the gridding matrix is a 21-avoider, 12-avoider, or empty). Because of their more general applicability, the study of grid classes in their own right has emerged in a few directions. For instance, it is conjectured that all monotone grid classes are finitely based, but this is only known for a few special cases, most notably those whose row-column graph is acyclic~\cite{aabrv2013}, and a few other special cases (see~\cite{albert-brignall-2times2, atkinson1997restricted, bevan2015thesis,  waton}). In another direction, the role of grid classes with respect to partial well-ordering has been explored in e.g.~\cite{brignall2012pwo, murphy2003pwo, vatter2011pwo}. Finally, while the asymptotic enumeration of monotone grid classes was answered completely by Bevan~\cite{bevan:growth-rates:}, exact enumeration is harder, primarily due to the difficulty of handling multiple griddings: that is, enumerating `griddable' objects rather than `gridded' ones. One general result here is that all geometric grid classes have rational generating functions~\cite{aabrv2013}, but the move from `gridded' to `griddable' is nonconstructive, instead relying on properties of regular languages.

As a first step towards enumerating more general grid classes, in this paper we replace one cell in the gridding matrix $M$ of a monotone grid class by a \emph{Catalan class}, that is, one avoiding a single permutation of length 3. For simplicity, we restrict our attention to $1\times 2$ grids, although the techniques presented here could be used in larger grids. These $1\times 2$ grid classes are also referred to as \emph{juxtapositions} --- in our case a Catalan class in the left cell, and a monotone class in the right one.

\begin{table}[!ht]
\centering
\begin{tabular}{|c c c|}
\hline
$\Av(213|21)$, \underline{\boldmath{$\Av(231|12)$}} & $\overset{\theta}{\longleftrightarrow}$ & $\Av(123|21)$, \underline{$\Av(321|12)$} \\
$\Av(123|12)$, \underline{\boldmath{$\Av(321|21)$}} & $\overset{\psi}{\longleftrightarrow}$ & $\Av(213|12)$, \underline{$\Av(231|21)$} \\
$\Av(132|12)$, \underline{\boldmath{$\Av(312|21)$}} & $\overset{\phi}{\longleftrightarrow}$ & $\Av(132|21)$, \underline{$\Av(312|12)$} \\
  \hline
\end{tabular}
\caption{\small Each row contains equinumerous classes. Classes in pairs (separated by commas) are equinumerous by symmetry. Left column and right column (of the same row) are equinumerous by one of the bijections $\theta, \psi, \phi$, see Section~\ref{sec:bijections}. Each bijection describes a correspondence between the underlined classes in the given row. The classes in bold are enumerated via context-free grammars.}
\label{tab:bijections}
\end{table}

Let $P_i$ and $S_j$ be permutations, for all $1\leq i\leq k$ and $1 \leq j \leq l$. Also, let $\mathbf{U} = \Av(P_1,\ldots,P_k)$ and $\mathbf{V} = \Av(S_1,\ldots,S_\ell)$ be  classes of permutations that avoid $P_1,\ldots, P_k$ and $S_1,\ldots,S_\ell$, respectively. A \emph{juxtaposition class} $\mathbf{W} = \Av(P_1,\ldots,P_k\mid S_1, \ldots, S_\ell)$ of classes $\mathbf{U}$ and $\mathbf{V}$ is the set of permutations whose form is $AB$ with $A\in \mathbf{U}$ and $B \in \mathbf{V}$. We say that $\mathbf{U}$ is on the left-hand side (LHS) of $\mathbf{W}$ and that $\mathbf{V}$ is on the right-hand side (RHS) of $\mathbf{W}$. The diagram on the right in Figure~\ref{fig:gridexample} shows a permutation 1342 from the juxtaposition class $\Av(21|12)$, together with two possible griddings of 1342. Equipped with the definition of a juxtaposition, we can now refer to Table~\ref{tab:bijections}. It schematizes the relationships between juxtapositions $\Av(P|S)$, where $P$ is of length three and $S$ of length two or vice versa. Out of all these, only twelve are essentially distinct (not symmetries of each other), and all of them have the Catalan class $\Av(abc)$ on the same side in the juxtaposition. So, without loss of generality, we assume a Catalan class to be on the left and a monotone class on the right. By further symmetries, these twelve juxtaposition classes can be coupled into equinumerous pairs. See Table~\ref{tab:bijections}. To the best of our knowledge, only two of the twelve classes in Table~\ref{tab:bijections} have been enumerated --- $\Av(231|12)$ by Bevan~\cite{bevan-new} (and hence $\Av(213|21)$) and $\Av(321|12)$ by Miner~\cite{miner16twobyfour} (and hence $\Av(123|21)$). The goal of this paper is to enumerate the remaining two juxtaposition classes in bold in Table~\ref{tab:bijections} and find bijections between the pairs of underlined classes. This completes the enumeration for all juxtapositions of a Catalan class with a monotone one.

In section~\ref{sec:defs} we introduce the concepts that we need and demonstrate them on an example. Section~\ref{sec:enum} contains enumerations of the boldface classes $\Av(231|12)$, $\Av(321|21)$ and $\Av(312|21)$.  Bijections between the underlined classes are presented in Section~\ref{sec:bijections}. We mention open questions in Section~\ref{sec:conclusion}.

\section{Definitions and overview}
\label{sec:defs}
We assume the reader is familiar with elementary definitions. They are conveniently presented in a note by Bevan~\cite{bevan2015defs}.  We refer to the three juxtaposition classes in bold in Table~\ref{tab:bijections} as $\mathbf{A}, \mathbf{B}$, and $\mathbf{C}$, respectively. We list them below together with their representations in terms of bases, using~\cite{atkinson1997restricted}.

\begin{center}
\begin{tabular}{c c c c l}
$\mathbf{A}$&:= & $\Av(231|12)$ & = & $\Av(2314, 2413, 3412)$ \\
$\mathbf{B}$&:= & $\Av(321|21)$ & = & $\Av(4321, 32154, 42153, 52143, 43152, 53142)$ \\
$\mathbf{C}$&:= & $\Av(312|21)$ & = & $\Av(4132, 4231, 31254, 41253)$\\  
\end{tabular}
\end{center}

\noindent A quick enumeration (using PermLab~\cite{albertpermlab}) of the first twelve elements in $\mathbf{A}, \mathbf{B}$, and $\mathbf{C}$ yields the following sequences.

\begin{center}
\begin{tabular}{l l}
$\mathbf{A}$: \href{http://oeis.org/A033321}{A033321}& 1, 2, 6, 21, 79, 311, 1265, 5275, 22431, 96900, 424068, 1876143\\
$\mathbf{B}$: \href{http://oeis.org/A278301}{A278301}& 1, 2, 6, 23, 98, 434, 1949, 8803, 39888, 181201, 825201, 3767757\\
$\mathbf{C}$: \href{http://oeis.org/A165538}{A165538}& 1, 2, 6, 22, 88, 367, 1568, 6810, 29943, 132958, 595227, 2683373\\
\end{tabular}
\end{center}
We observe that the sequence \href{http://oeis.org/A165538}{A165538} enumerates both classes $\mathbf{C}$ and $\Av(4312, 3142)$. The latter is studied as Example 2 in~\cite{albert2012inflations}. We do not know of a straightforward reason for these to be equinumerous.

A \emph{Dyck path} of length $2n$ is a path on the integer grid from $(0,0)$ to $(n,n)$ where each step is either $(k,m) \to (k+1,m)$ or $(k,m) \to (k,m+1)$. A Dyck path must stay on one side of the diagonal, i.e.~it is not allowed to cross but can \emph{touch} the diagonal. See Figure~\ref{fig:dyckexample} for an example. The orientation of the Dyck path in our definition is arbitrary, and we will use the term ``Dyck path'' to refer to any of the four symmetries of a Dyck path (above/below the diagonal and top-to-bottom/bottom-to-top). In this paper, we only need two kinds of Dyck paths: those that use \emph{up} and \emph{right} steps (bottom-left to top-right Dyck paths that stay above the diagonal) and those that use \emph{down} and \emph{left} steps (top-right to bottom-left Dyck paths that stay below the diagonal). We will always specify which one of the two cases we work with.\\
\begin{figure}[!ht]
\begin{center}
\begin{tikzpicture}[scale=0.3]
\dyck{0,0}{10}{black}{0,0,0,1,1,0,0,1,0,0,1,0,1,1,1,1,0,0,1,1}
  \draw[dashed] (0,0) -- (10,10); 
\end{tikzpicture}
\end{center}
\caption{\small Example of a Dyck path of length 20.}
\label{fig:dyckexample}
\end{figure}
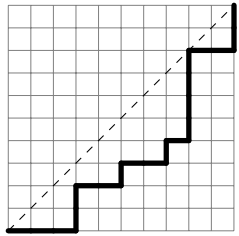

Let $\DD$ be Dyck path of length $2n$ from top right to the bottom left corner of the grid. Let it be below the diagonal. \emph{Offsetting} a diagonal means considering a line with slope one from $(1,0)$ to $(n,n-1)$ on a square grid, instead of the original diagonal $(0,0)$ to $(n,n)$ (in case $\DD$ is above the diagonal, we offset in the opposite direction). Figure~\ref{fig:bij231dyck} illustrates this with the dotted line moving downwards along the $\backslash$-diagonal from 3rd to 4th and from 4th to 5th subfigure. An \emph{excursion} in $\DD$ is the part of the Dyck path $\DD$ which meets the diagonal only at the beginning and at the end.

It is well known that permutations of length $n$ in $\Av(231)$ are in one-to-one correspondence with Dyck paths of length $2n$. And so are permutations of length $n$ in $\Av(321)$. There are a number of bijections between these Catalan objects. We now fix one for each correspondence and describe them in detail.

\begin{figure}[!ht]
\begin{center}
\begin{tikzpicture}[scale=0.25]
\dyck{0,0}{9}{black}{0,0,0,1,1,0,0,1,0,0,1,0,1,1,1,1,0,1}
\draw[dashed] (0,0) -- (9,9); 
\end{tikzpicture}
\begin{tikzpicture}[scale=0.25]
\dyck{0,0}{9}{black}{0,0,0,1,1,0,0,1,0,0,1,0,1,1,1,1,0,1}
\draw[dashed] (0,0) -- (9,9); 
\filldraw[black] (2.5,0.5) circle (6pt);
\filldraw[black] (4.5,2.5) circle (6pt);
\filldraw[black] (6.5,3.5) circle (6pt);
\filldraw[black] (7.5,4.5) circle (6pt);
\filldraw[black] (8.5,8.5) circle (6pt);
\end{tikzpicture}
\begin{tikzpicture}[scale=0.25]
\dyck{0,0}{9}{black}{0,0,0,1,1,0,0,1,0,0,1,0,1,1,1,1,0,1}
\draw[dashed] (0,0) -- (8,8); 
\filldraw[black] (2.5,0.5) circle (6pt);
\filldraw[black] (4.5,2.5) circle (6pt);
\filldraw[black] (6.5,3.5) circle (6pt);
\filldraw[black] (7.5,4.5) circle (6pt);
\filldraw[black] (8.5,8.5) circle (6pt);
\filldraw[black,fill=white] (0.5,7.5) circle (6pt);
\end{tikzpicture}
\begin{tikzpicture}[scale=0.25]
\dyck{0,0}{9}{black}{0,0,0,1,1,0,0,1,0,0,1,0,1,1,1,1,0,1}
\draw[dashed] (1,0) -- (8,7); 
\filldraw[black] (2.5,0.5) circle (6pt);
\filldraw[black] (4.5,2.5) circle (6pt);
\filldraw[black] (6.5,3.5) circle (6pt);
\filldraw[black] (7.5,4.5) circle (6pt);
\filldraw[black] (8.5,8.5) circle (6pt);
\filldraw[black] (0.5,7.5) circle (6pt);
\filldraw[black,fill=white] (1.5,1.5) circle (6pt);
\filldraw[black,fill=white] (3.5,6.5) circle (6pt);
\end{tikzpicture}
\begin{tikzpicture}[scale=0.25]
\dyck{0,0}{9}{black}{0,0,0,1,1,0,0,1,0,0,1,0,1,1,1,1,0,1}
\draw[dashed] (5,3) -- (8,6); 
\filldraw[black] (2.5,0.5) circle (6pt);
\filldraw[black] (4.5,2.5) circle (6pt);
\filldraw[black] (6.5,3.5) circle (6pt);
\filldraw[black] (7.5,4.5) circle (6pt);
\filldraw[black] (8.5,8.5) circle (6pt);
\filldraw[black] (0.5,7.5) circle (6pt);
\filldraw[black] (1.5,1.5) circle (6pt);
\filldraw[black] (3.5,6.5) circle (6pt);
\filldraw[black,fill=white] (5.5,5.5) circle (6pt);
\end{tikzpicture}
\begin{tikzpicture}[scale=0.25]
\fill[white]  (0,0) rectangle +(9,9);
\draw[help lines] (0,0) grid +(9,9);
\filldraw[black] (2.5,0.5) circle (6pt);
\filldraw[black] (4.5,2.5) circle (6pt);
\filldraw[black] (6.5,3.5) circle (6pt);
\filldraw[black] (7.5,4.5) circle (6pt);
\filldraw[black] (8.5,8.5) circle (6pt);
\filldraw[black] (0.5,7.5) circle (6pt);
\filldraw[black] (1.5,1.5) circle (6pt);
\filldraw[black] (3.5,6.5) circle (6pt);
\filldraw[black] (5.5,5.5) circle (6pt);
\end{tikzpicture}
\caption{\small{An example of a reversible process (left to right) of associating a unique 231-avoider with a given Dyck path.~\cite{albert-new}.}}
\label{fig:bij231dyck}
\end{center}
\end{figure}
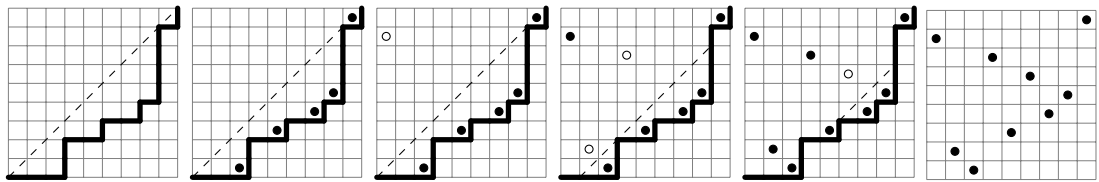

This paragraph is best read alongside Figure~\ref{fig:bij231dyck}. To obtain a 231-avoider of length $n$ from a Dyck path $\DD$, one places a point into each \emph{corner} (a down step followed by a left step) to obtain the right-to-left minima of the permutation to be constructed. This is the second diagram from the left in Figure~\ref{fig:bij231dyck}.  We place the remaining points of the permutation for each excursion of $\DD$ separately. Given such an excursion, the first and last steps are not consecutive/adjacent and must be a down step and a left step, respectively. Insert a point in the square where the respective row and column of these steps meet. This is documented in the third diagram of Figure~\ref{fig:bij231dyck}. The points inserted in a given step are marked by a circle. Do the same for each excursion of $\DD$. In the next step offset the diagonal, disregard the points above the new diagonal, and repeat the process. It is easy to check that this procedure is correct (gives a 231-avoider) and reversible, since the corners of the Dyck path are right-to-left minima.

\begin{figure}[!ht]
\begin{center}
\begin{tikzpicture}[scale=0.2]
\dyck{0,0}{9}{black}{0,0,0,1,1,0,0,1,0,0,1,0,1,1,1,1,0,1}
\end{tikzpicture}
\begin{tikzpicture}[scale=0.2]
\dyck{0,0}{9}{black}{0,0,0,1,1,0,0,1,0,0,1,0,1,1,1,1,0,1}
\filldraw[black] (2.5,0.5) circle (6pt);
\filldraw[black] (4.5,2.5) circle (6pt);
\filldraw[black] (6.5,3.5) circle (6pt);
\filldraw[black] (7.5,4.5) circle (6pt);
\filldraw[black] (8.5,8.5) circle (6pt);
\end{tikzpicture}
\begin{tikzpicture}[scale=0.2]
\dyck{0,0}{9}{black}{0,0,0,1,1,0,0,1,0,0,1,0,1,1,1,1,0,1}
\filldraw[black] (2.5,0.5) circle (6pt);
\filldraw[black] (4.5,2.5) circle (6pt);
\filldraw[black] (6.5,3.5) circle (6pt);
\filldraw[black] (7.5,4.5) circle (6pt);
\filldraw[black] (8.5,8.5) circle (6pt);
\filldraw[black,fill=white] (5.5,7.5) circle (6pt);
\end{tikzpicture}
\begin{tikzpicture}[scale=0.2]
\dyck{0,0}{9}{black}{0,0,0,1,1,0,0,1,0,0,1,0,1,1,1,1,0,1}
\filldraw[black] (2.5,0.5) circle (6pt);
\filldraw[black] (4.5,2.5) circle (6pt);
\filldraw[black] (6.5,3.5) circle (6pt);
\filldraw[black] (7.5,4.5) circle (6pt);
\filldraw[black] (8.5,8.5) circle (6pt);
\filldraw[black] (5.5,7.5) circle (6pt);
\filldraw[black,fill=white] (3.5,6.5) circle (6pt);
\end{tikzpicture}
\begin{tikzpicture}[scale=0.2]
\dyck{0,0}{9}{black}{0,0,0,1,1,0,0,1,0,0,1,0,1,1,1,1,0,1}
\filldraw[black] (2.5,0.5) circle (6pt);
\filldraw[black] (4.5,2.5) circle (6pt);
\filldraw[black] (6.5,3.5) circle (6pt);
\filldraw[black] (7.5,4.5) circle (6pt);
\filldraw[black] (8.5,8.5) circle (6pt);
\filldraw[black] (5.5,7.5) circle (6pt);
\filldraw[black] (3.5,6.5) circle (6pt);
\filldraw[black,fill=white] (1.5,5.5) circle (6pt);
\end{tikzpicture}
\begin{tikzpicture}[scale=0.2]
\dyck{0,0}{9}{black}{0,0,0,1,1,0,0,1,0,0,1,0,1,1,1,1,0,1}
\filldraw[black] (2.5,0.5) circle (6pt);
\filldraw[black] (4.5,2.5) circle (6pt);
\filldraw[black] (6.5,3.5) circle (6pt);
\filldraw[black] (7.5,4.5) circle (6pt);
\filldraw[black] (8.5,8.5) circle (6pt);
\filldraw[black] (5.5,7.5) circle (6pt);
\filldraw[black] (3.5,6.5) circle (6pt);
\filldraw[black] (1.5,5.5) circle (6pt);
\filldraw[black,fill=white] (0.5,1.5) circle (6pt);
\end{tikzpicture}
\begin{tikzpicture}[scale=0.2]
\fill[white]  (0,0) rectangle +(9,9);
\draw[help lines] (0,0) grid +(9,9);
\filldraw[black] (2.5,0.5) circle (6pt);
\filldraw[black] (4.5,2.5) circle (6pt);
\filldraw[black] (6.5,3.5) circle (6pt);
\filldraw[black] (7.5,4.5) circle (6pt);
\filldraw[black] (8.5,8.5) circle (6pt);
\filldraw[black] (5.5,7.5) circle (6pt);
\filldraw[black] (3.5,6.5) circle (6pt);
\filldraw[black] (1.5,5.5) circle (6pt);
\filldraw[black] (0.5,1.5) circle (6pt);
\end{tikzpicture}
\caption{\small{An example of a reversible process (left to right) of associating a unique 321-avoider with a given Dyck path.}}
\label{fig:bij321dyck}
\end{center}
\end{figure}
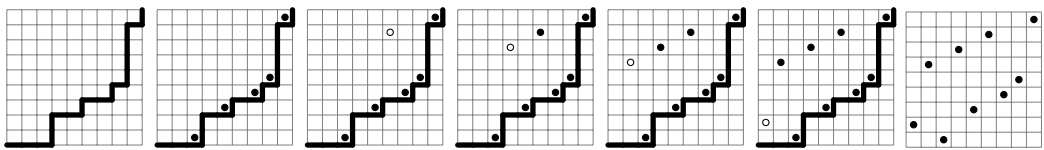

This paragraph is accompanied by Figure~\ref{fig:bij321dyck}. To obtain a 321-avoider of length $n$ from a Dyck path $\DD$, place right-to-left minima as in the case above (231-avoider to Dyck path). Then, find the first down step that is not immediately succeeded by a left step and find the first left step that is not immediately preceded by a down step. Place a point in the square where the row and column of these two meet. This step is shown in the third diagram in Figure~\ref{fig:bij321dyck} --- the new point at each step is denoted by a circle. Add the rest of the points for the further down steps and left steps in the same fashion.

Having fixed bijections between Dyck paths and 231-avoiders and Dyck paths and 321-avoiders, we will refer to the Dyck path \emph{corresponding} to a permutation $P$, and vice versa, whenever one of the bijections transforms one into the other.

A \emph{context-free grammar (CFG)} is a formal grammar that describes a language consisting of only those words which can be obtained from a starting string by repeated use of permitted production rules/substitutions. Formally, a context-free grammar is a four-tuple $(V, \Sigma, R, \fS )$: a finite set of \emph{variables} $V$, a set of terminal characters (symbols) $\Sigma$, a finite relation $R$ from $V$ to $(V \cup \Sigma)^*$ where $*$ is the Kleene star, and the start variable $\fS$, $\fS \in V$. The pairs in $R$ are called \emph{production rules}, because they specify what we are allowed to substitute into a given variable. In our context-free grammars, the starting symbol is always $\fS$ (unless CFG describes a Catalan object, then it is $\fC$) and $\epsilon$ denotes an empty word. A good reference for formal languages is~\cite{hopcroft2001automata}.

\begin{example}[Enumerating $\Av(231)$]
Enumerating this class serves as a template for how we intend to use CFGs in Section~\ref{sec:enum}. Recall that Dyck paths are in one-to-one correspondence with 231-avoiding permutations. We use this fact to enumerate permutations in $\Av(231)$ by counting Dyck paths. A Dyck path corresponding to a 231-avoider (we start in the bottom left corner, and stay below the diagonal) is either empty or starts with a step right, followed by an arbitrary Dyck path, followed by a step up, followed by an arbitrary Dyck path. Denote by $\fC$ a Dyck path ($\fC$ stands for Catalan object), by $\epsilon$ the empty Dyck path, and by $\fU$, $\fR$ up step and right step respectively. Then the CFG for Dyck paths (and hence also for 231- and 321-avoiders) looks as follows
\begin{align*}
\fC &\to \epsilon \mid \fR\fC\fU\fC.
\end{align*}
\noindent As written, this grammar is unambiguous, and can be transcribed syntactically into a functional equation. We let $z$ count the number of $up$ steps (or alternatively the number of right steps) in the Dyck path. We find that $c = c(z)$ satisfies the relation below 
\begin{align*}
c &= 1 + zc^2.
\end{align*}
\noindent Consequently, we obtain $c$ as a function of $z$ by ``solving for'' $c$ formally
\begin{align*}
c(z) &= \frac{1 +\sqrt{1-4z}}{2z}.
\end{align*}
\end{example}

\section{Enumeration}
\label{sec:enum}

Let $P \in \Av(abc)$ and let $\P$ be the corresponding Dyck path, and let $X \in \Av(xy)$. Recall that when enumerating elements of $\Av(abc|xy)$, we chose to place the vertical gridline as far right as possible.  If it was any further right, there would be a point on the RHS which would serve as $c$ in a copy of $abc$. In Figure~\ref{fig:gridded}, for instance, there is no copy of 231 on the LHS, but if the vertical gridline was shifted one place to the right, the three unfilled points would form a copy of 231 on the LHS of the gridline. The gridline is already as far right as possible. The permutations gridded this way are exactly the $\Av(abc|xy)$-griddable permutations and we are free to enumerate these gridded permutations. The setup will be as in Figure~\ref{fig:translation}. A Dyck path $\P$ on the left decorated by sequences of points from the right. We will always specify whether a sequence of points corresponding to a vertical step $V$ in $\P$ is placed immediatelly below $V$ or immediatelly above $V$. Figure~\ref{fig:decorated} shows the ``below''case. Every griddable permutation from the juxtaposition class has a unique gridding that maximizes the size of the $\Av(abc)$ class, i.e.~pushing the gridline as far to the right as possible. We call the leftmost point on the RHS the \emph{first} point on the RHS. We say that a point $r$ on the RHS is \emph{enclosed} by two points $p,q$ on the LHS if $p$ is above $r$ and $r$ is above $q$ in the drawing of the juxtaposition. If $\DD'$ is a section of a Dyck path $\DD$ on the LHS, we say that a point $r$ on the RHS is \emph{contained} in $\DD'$ whenever the end points of $\DD'$ enclose $r$. A \emph{Catalan block} $\DD'$ (or a \emph{Dyck block}) is a section of a Dyck path $\DD$ which is a Dyck path with respect to some offset of the diagonal (it begins and ends on it and stays on one side of it at all times). Notice that a Dyck block is essentially a collection of consecutive excursions that all have the same offset from the diagonal.

\begin{figure}[h!]
\centering
\subcaptionbox{Sequences of points are placed at the bottom of the corresponding vertical steps (highlighted).\label{fig:decorated}}[0.45\textwidth]{
\begin{tikzpicture}[scale=0.3]
\dyck{0,0}{14}{black}{0,1,0,0,1,0,0,1,1,1,0,1,0,0,0,1,1,0,1,0,0,1,1,0,0,1,1,1}
\filldraw[black] (13.5,11.5) circle (6pt);
\filldraw[black] (11.5,9.5) circle (6pt);
\filldraw[black] (9.5,8.5) circle (6pt);
\filldraw[black] (8.5,6.5) circle (6pt);
\filldraw[black] (5.5,5.5) circle (6pt);
\filldraw[black] (4.5,2.5) circle (6pt);
\filldraw[black] (2.5,1.5) circle (6pt);
\filldraw[black] (.5,.5) circle (6pt);
\filldraw[black] (3.5,3.5) circle (6pt);
\filldraw[black] (1.5,4.5) circle (6pt);
\filldraw[black] (6.5,13.5) circle (6pt);
\filldraw[black] (7.5,7.5) circle (6pt);
\filldraw[black] (10.5,10.5) circle (6pt);
\filldraw[black] (12.5,12.5) circle (6pt);
\draw[gray] (14,10) edge[out=0, in=-160] (18,11);
\draw[gray] (14,10) edge[out=0, in=160] (18,9);
\draw[gray] (14,7) edge[out=0, in=-160] (20,8);
\draw[gray] (14,7) edge[out=0, in=160] (20,6);
\draw[gray] (14,3) edge[out=0, in=-160] (22,4);
\draw[gray] (14,3) edge[out=0, in=160] (22,2);
\filldraw[black] (18.25,10.5) circle (3pt);
\filldraw[black] (18.5,10.25) circle (3pt);
\filldraw[black] (18.75,10) circle (3pt);
\filldraw[black] (19,9.75) circle (3pt);

\filldraw[black] (20.25,7.25) circle (3pt);
\filldraw[black] (20.5,7) circle (3pt);
\filldraw[black] (20.75,6.75) circle (3pt);

\filldraw[black] (22.25,3.25) circle (3pt);
\filldraw[black] (22.5,3) circle (3pt);
\filldraw[black] (22.75,2.75) circle (3pt);
\filldraw[black] (23,2.5) circle (3pt);
\filldraw[black] (23.25,2.25) circle (3pt);
\filldraw[black] (23.5,2) circle (3pt);
\draw[Orange, line width=0.7mm] (12,10) -- (12,11);
\draw[Orange, line width=0.7mm] (9,7) -- (9,8);
\draw[Orange, line width=0.7mm] (5,3) -- (5,4);

\filldraw[white] (0,-2) circle (3pt);

\end{tikzpicture}}\hfill
\subcaptionbox{Gridline is pushed as far right as possible: the distinguished points would form a copy of 231 on the LHS.\label{fig:gridded}}[0.45\textwidth]{
\begin{tikzpicture}[scale=0.2]
\fill[white]  (0,0) rectangle +(27,27);
\draw[help lines] (0,0) grid +(27,27);
\filldraw[black,fill=white] (13.5,24.5) circle (6pt);
\filldraw[black] (12.5,25.5) circle (6pt);
\filldraw[black] (11.5,18.5) circle (6pt);
\filldraw[black,fill=white] (10.5,23.5) circle (6pt);
\filldraw[black] (9.5,17.5) circle (6pt);
\filldraw[black] (8.5,12.5) circle (6pt);
\filldraw[black] (7.5,16.5) circle (6pt);
\filldraw[black] (6.5,26.5) circle (6pt);
\filldraw[black] (5.5,11.5) circle (6pt);
\filldraw[black] (4.5,2.5) circle (6pt);
\filldraw[black] (3.5,9.5) circle (6pt);
\filldraw[black] (2.5,1.5) circle (6pt);
\filldraw[black] (1.5,10.5) circle (6pt);
\filldraw[black] (.5,.5) circle (6pt);
\filldraw[black,fill=white] (14.5,22.5) circle (6pt);
\filldraw[black] (15.5,21.5) circle (6pt);
\filldraw[black] (16.5,20.5) circle (6pt);
\filldraw[black] (17.5,19.5) circle (6pt);

\filldraw[black] (18.5,15.5) circle (6pt);
\filldraw[black] (19.5,14.5) circle (6pt);
\filldraw[black] (20.5,13.5) circle (6pt);

\filldraw[black] (21.5,8.5) circle (6pt);
\filldraw[black] (22.5,7.5) circle (6pt);
\filldraw[black] (23.5,6.5) circle (6pt);
\filldraw[black] (24.5,5.5) circle (6pt);
\filldraw[black] (25.5,4.5) circle (6pt);
\filldraw[black] (26.5,3.5) circle (6pt);
\draw[black, line width=0.5mm] (14,-0.5) -- (14,27.5);
\end{tikzpicture}}
\caption{\small On the left is a decorated Dyck path of a 231-avoider, while on the right is the corresponding gridded permutation from $\Av(231|12)$. Every griddable permutation in $\Av(231|12)$ has exactly one such gridding.}
\label{fig:translation}
\end{figure}

\subsection{Class $\mathbf{A} = \Av(231|12)$}
\label{subsec:davidsclass}
As we mentioned already, a symmetry of this class was enumerated by Bevan~\cite{bevan-new} as a step towards the enumeration of $\Av(4213,2143)$, by exploiting a tree-like structure of the permutations in $\Av(231)$. Here, we enumerate $\Av(231|12)$ by first describing through a context-free grammar.

We represent a 231-avoiding permutation $P$ by a Dyck path $\P$ from top-right to bottom-left, and below the diagonal. Sequences of points (each possibly empty) on the RHS are placed immediately below the corresponding down steps. Again, the gridding of $P \in \Av(231|12)$ is chosen to maximize the number of points on the LHS. Therefore, the leftmost point on the RHS must be below the ``1'' in the topmost 12 on the LHS. In a corresponding Dyck path $\P$, this means that we can start placing point sequences on the RHS as soon as we have encountered a down step ($\fD$) in $\P$ that has a left step ($\fL$) before it. The following letters will be needed to build $\P$. 
\begin{enumerate}
\item[$\fL$ --] left step
\item[$\fD$ --] down step before any left steps occured
\item[$\bD$ --] down step after left step already occured
\end{enumerate}

We denote by $\bC$ a Dyck path over letters $\fL$ and $\bD$, while $\fC$ is a standard Dyck path over $\fL$ and $\fD$. Given these building blocks, a Dyck path representing a 231-avoider can only start with a $\fD$, followed by a possibly empty Dyck path, and returning to the diagonal with a left-step $\fL$. Afterwards any other Dyck path can be appended as long as it is over letters $\fL, \bD$. Hence, the CFG describing the elements of $\Av(231|12)$ has the following rules.

\begin{align*}
\fS &\to \epsilon \mid \fD\fS\fL\bC\\
\bC &\to \epsilon \mid \bD\bC\fL\bC
\end{align*}

From here we can directly pass to the functional equations by letting $z$ track the down steps (whether $\fD$ or $\bD$) and $t$ track the sequences of points in the right compartment of the grid. In particular, $\bD$ transcribes as $tz$. Both $s$ and $\bc$ are functions of $t$ and $z$.

\begin{align*}
s &= 1 + zs\bc\\
\bc &= 1 + tz\bc^2
\end{align*}

Since there is no ambiguity to how we place points in the right compartment of the grid, only decreasing sequences are allowed, we let $t = 1/(1-z)$. Solving for $s$ gives the next theorem. 

\begin{theorem}[Bevan~\cite{bevan-new}]
The generating function for $\mathbf{A} = \Av(231|12)$ is 
\begin{align*}
s_{\mathbf{A}}(z)
&= \frac{1+z-\sqrt{1-6z+5z^2}}{2z(2-z)}
\end{align*}
\end{theorem}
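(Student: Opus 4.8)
The functional equations
\begin{align*}
s &= 1 + zs\bc, & \bc &= 1 + tz\bc^2
\end{align*}
already encode the full combinatorial content, so the plan is purely to solve this system in closed form. First I would recall why the transcription is legitimate: every $\Av(231|12)$-griddable permutation has a single gridding that pushes the vertical gridline maximally right, and the grammar with rules $\fS \to \epsilon \mid \fD\fS\fL\bC$ and $\bC \to \epsilon \mid \bD\bC\fL\bC$ derives the associated decorated Dyck path exactly once. Hence the symbolic transfer is sound, with $z$ marking down steps and $t$ marking the point-sequences on the right. Substituting $t = 1/(1-z)$ installs the geometric factor $1 + z + z^2 + \cdots$ accounting for the unique decreasing sequence of each length sitting in the right compartment.

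With $t$ fixed, the second equation becomes a quadratic in $\bc$, namely $z\bc^2 - (1-z)\bc + (1-z) = 0$. Applying the quadratic formula and factoring the discriminant as $(1-z)^2 - 4z(1-z) = (1-z)(1-5z) = 1 - 6z + 5z^2$ yields
\begin{align*}
\bc = \frac{(1-z) \pm \sqrt{1-6z+5z^2}}{2z}.
\end{align*}
I would then select the minus branch: the plus branch has a pole at $z=0$, whereas the combinatorial series must satisfy $\bc|_{z=0}=1$, and the minus branch indeed gives $\bc = 1 + O(z)$.

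Finally, solving the first equation gives $s = 1/(1-z\bc)$. Since $z\bc = \tfrac12\bigl((1-z) - \sqrt{1-6z+5z^2}\bigr)$, the denominator simplifies to $1 - z\bc = \tfrac12\bigl(1+z + \sqrt{1-6z+5z^2}\bigr)$, so $s = 2/\bigl(1+z+\sqrt{1-6z+5z^2}\bigr)$. Rationalizing by the conjugate $1+z - \sqrt{1-6z+5z^2}$ converts the denominator into $(1+z)^2 - (1-6z+5z^2) = 4z(2-z)$, producing exactly the claimed
\begin{align*}
s_{\mathbf{A}}(z) = \frac{1+z-\sqrt{1-6z+5z^2}}{2z(2-z)}.
\end{align*}

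The calculation is essentially routine; the only places needing care are the branch selection (governed by the requirement that $\bc$ be a power series) and the sign bookkeeping in the rationalization. The genuine mathematical substance — that the grammar is unambiguous and captures precisely the maximally gridded permutations — has already been established. As a sanity check I would expand $s_{\mathbf{A}}(z)$ and confirm that it reproduces the opening terms $1, 2, 6, 21, 79, 311, \dots$ of the sequence \href{http://oeis.org/A033321}{A033321}.
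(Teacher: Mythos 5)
Your proposal is correct and follows exactly the paper's route: the same grammar, the same transcription with $t=1/(1-z)$, and then solving the resulting system (the paper simply omits the algebra that you carry out, and your branch selection, discriminant factorization $(1-z)(1-5z)$, and rationalization to $4z(2-z)$ are all right). Nothing further is needed.
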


The generating function $s_{\mathbf{A}}(z)$ stores terms of \href{http://oeis.org/A033321}{A033321} in OEIS~\cite{oeis}. The initial twelve coefficients of $s_{\mathbf{A}}(z)$ are 1, 2, 6, 21, 79, 311, 1265, 5275, 22431, 96900, 424068, 1876143. Again, see Bevan~\cite{bevan-new} for a different approach to enumerating this juxtaposition class. Also, Miner~\cite{miner16twobyfour} recently enumerated $\Av(321|12)$ (via yet another approach), which is in bijection with $\Av(231|12)$ as we show in Section~\ref{sec:bijections}.

\subsection{Class $\mathbf{B} = \Av(321|21)$}
We represent a 321-avoiding permutation $P$ by a Dyck path $\P$ from the bottom left to the top right corner of the grid, and staying above the diagonal. The sequences of points (each possibly empty) on the RHS, associated with vertical steps (up steps) on the LHS, are always placed directly below the corresponding $\bU$ on the LHS (i.e. below the point in $P$ associated with the $\bU$). Additionally, we need to correct for the points on the RHS which can occur above everything on the LHS. We do this when transcribing the grammar to equations. Below is the alphabet used for our CFG.

\begin{enumerate}
\item[$\fU$ --] up step, no sequence of points on the RHS associated with it
\item[$\fR$ --] right step
\item[$\bU$ --] up step with a possible sequence of points right below it on RHS
\item[$\bU_1$ --] up step associated with the first point on the RHS 
\end{enumerate}

\noindent The CFG rules are below. Starting state is $\fS$.

\begin{align*}
\fS &\to \fC \mid \fC\fM\bC\\
\fM &\to \fU\fC\fM\bC\fR \mid \bU_1\bC^+\fR \mid \fB\bE\bU\bC^+\fR\\
\fB &\to \bU_1\fR \mid \fU\fC\fB\bE\fR\\
\fC &\to \epsilon \mid \fU\fC\fR\fC\\
\bC &\to \epsilon \mid \bU\bC\fR\bC\\
\bC^+ &\to \bU\bC\fR\bC\\
\bE &\to \epsilon \mid \bU\fR\bE
\end{align*}

\noindent We now describe the rules above in detail. 

\begin{enumerate}
\item[$\fS$ --] Start rule. The entire object is either a Catalan object without points on the RHS, or there is at least one point on the RHS. In the latter case, the entire word starts with a Catalan object $\fC$, then there is the middle part $\fM$ containing $\bU_1$ and a 21 above this $\bU_1$. All this is followed by a possibly empty Catalan object $\bC$ that allows points on the RHS.
\item[$\fM$ --] Middle part (possibly multiple excursions) with $\bU_1$ and the first 21 above this $\bU_1$. Part $\fM$ either starts with a point on the RHS and then $\bU_1\bC^+\fR$ guarantees a 21 in the same Catalan block that sits on the diagonal (the $\bU_1$ is not in the corner). Or we first see the initial point on the RHS without a 21 above it and in the same Catalan block (call this block $B$) but we then see a 21 later (in this case, the $\bU_1$ is in the corner, i.e. not in $\bU_1\fR$). This is the case $\fB\bE\bU\bC^+\fR$. Or we offset the diagonal and repeat the process, i.e.~we recur $\fM$. That is, $\fU\fC\fM\bC\fR$.
\item[$\fB$ --] An excursion with the first point on the RHS and no 21 above it in the same excursion. Either we are in the base case, $\bU_1\fR$, or we recur the construction of $\fB$. That is, nothing before $\fB$ can contain a point on the RHS, and nothing after $\fB$ can contain a 21. We get $\fU\fC\fB\bE\fR$ as desired.
\item[$\fC$ --] Catalan block without restrictions.
\item[$\bC$ --] Catalan block with points allowed on the RHS.
\item[$\bE$ --] Stairs with points allowed on the RHS (repeated $\bU\fR$).
\item[$\bC^+$ --] Catalan block that is not empty.
\end{enumerate}

\noindent We obtain the following equations from the rules above. The additional $t$ at the end of the first equation tracks the gap on the RHS above everything on the LHS. This gap could contain a non-empty increasing sequence of points. 

\begin{align*}
s &= c + cm\bc t\\
b &= z^2t + zcb\be\\
m &= zcm\bc + b\be zt(\bc -1) + z^2t(\bc -1)\\
c &= 1+zc^2\\
\bc &= 1+ zt\bc^2\\
\be &= 1+zt\be
\end{align*}
Given the role of $t$, to track sequences of points whose position is determined, we can again replace it with $1/(1-z)$. After the substitution and after solving the above equations, we find the following theorem.
\begin{theorem}
The generating function for $\mathbf{B} = \Av(321|21)$ is 
\begin{align*}
  s_{\mathbf{B}}(z) &= -\frac{1-\sqrt{1-4z} + z(-4+\sqrt{1-4z} + \sqrt{1-5z}/\sqrt{1-z})}{2z^2}
\end{align*}
\end{theorem}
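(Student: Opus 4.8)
The plan is to treat the displayed system of six functional equations --- which the preceding rule-by-rule analysis of the grammar justifies, once one checks that the grammar is unambiguous --- as the object to be solved, and to extract $s=s_{\mathbf{B}}$ by back-substitution. The one genuinely combinatorial point I would want to nail down first is unambiguity: each production must decompose every decorated Dyck path it generates in exactly one way, so that the transcription into equations (with $z$ marking down/up steps, $t$ marking the decorating point-sequences, and $\bU$ contributing a factor $tz$) is coefficient-wise exact. Granting this, what remains is a purely algebraic problem.

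First I would solve the three decoupled equations. The quadratics $c=1+zc^2$ and $\bc=1+zt\bc^2$ give
\[
c = \frac{1-\sqrt{1-4z}}{2z}, \qquad \bc = \frac{1-\sqrt{1-4zt}}{2zt},
\]
taking in each case the branch analytic at the origin, and the linear equation $\be=1+zt\be$ gives $\be=1/(1-zt)$. Since $b$ and $m$ each occur linearly in their own equations, I would then solve them directly:
\[
b = \frac{z^2 t}{1-zc\be}, \qquad m = \frac{zt(\bc-1)(z+b\be)}{1-zc\bc},
\]
the second obtained by collecting the two terms carrying the common factor $zt(\bc-1)$. Substituting into $s=c+c\bc t\,m$ then expresses $s$ entirely in terms of $c,\bc,\be,z,t$.

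The last step is the specialisation $t=1/(1-z)$, which records that the decorating sequences are forced decreasing runs of points. Here $zt=z/(1-z)$, so $1-4zt=(1-5z)/(1-z)$ and hence $\sqrt{1-4zt}=\sqrt{1-5z}/\sqrt{1-z}$, exactly the nested radical appearing in the statement; likewise $\be=(1-z)/(1-2z)$. After this substitution I would clear denominators and simplify, using the defining relations (for instance $zc^2=c-1$ and $zt\bc^2=\bc-1$) to reduce the degree of the $c$- and $\bc$-terms, until the expression collapses to the claimed
\[
s_{\mathbf{B}}(z) = -\frac{1-\sqrt{1-4z}+z\bigl(-4+\sqrt{1-4z}+\sqrt{1-5z}/\sqrt{1-z}\bigr)}{2z^2}.
\]

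The main obstacle is not conceptual but the algebraic bookkeeping: the factors $1-zc\be$ and $1-zc\bc$ in the denominators of $b$ and $m$ mix the two distinct radicals $\sqrt{1-4z}$ and $\sqrt{1-4zt}$, so the simplification does not split cleanly and is most safely carried out with a computer algebra system, accompanied by a hand check that each square root sits on the branch yielding a power series with non-negative coefficients. As an independent confirmation I would extract the first several Taylor coefficients of the closed form and match them against $1,2,6,23,98,434,\dots$ (OEIS A278301); this coefficient check both corroborates the identity and pins down the branch choices.
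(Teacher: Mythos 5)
Your proposal follows essentially the same route as the paper: the paper's argument for this theorem is precisely to justify the grammar rule by rule, transcribe it verbatim into the displayed system (with $\bU$, $\bU_1$ contributing $tz$ and an extra $t$ on the first equation for the gap above everything on the LHS), substitute $t=1/(1-z)$, and solve; your back-substitution steps (the two quadratics, $\be=1/(1-zt)$, the linear solves for $b$ and $m$, and the identity $\sqrt{1-4zt}=\sqrt{1-5z}/\sqrt{1-z}$) are all correct and simply make explicit the algebra the paper compresses into ``after solving the above equations.'' The coefficient check against A278301 matches the paper's own verification.
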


The generating function $s_{\mathbf{B}}(z)$ stores the sequence \href{http://oeis.org/A278301}{A278301} of OEIS~\cite{oeis}. The initial twelve coefficients of $s_{\mathbf{B}}(z)$ are 1, 2, 6, 23, 98, 434, 1949, 8803, 39888, 181201, 825201, 3767757.

\subsection{Class $\mathbf{C} = \Av(312|21)$}
\label{sec:classC}
We represent a 312-avoiding permutation $P$ by a Dyck path $\P$ starting in the bottom-left and ending in the top-right corner of the grid, and staying above the diagonal. The sequences of points (each possibly empty) on the RHS associated with $\bU$s are placed immediately above those respective $\bU$s. The letters in the alphabet stay the same as above when we described the class $\mathbf{B}$.

\begin{enumerate}
\item[$\fU$ --] up step, no sequence of points on the RHS associated with it
\item[$\fR$ --] right step
\item[$\bU$ --] up step with a possible sequence of points right above it on RHS
\item[$\bU_1$ --] first $\bU$, marks the up step with the first point on the RHS
\end{enumerate}

\noindent The following rules describe the context-free grammar enumerating $\Av(312|21)$.  

\begin{align*}
\fS &\to \fC \mid \fC\fM\bC\\
\fM &\to \bU_1\bC^+\fR \mid \fU\fC\fB\bC^+\fR \mid \fU\fC\fM\bC\fR\\
\fB &\to \bU_1\fR \mid \fU\fC\fB\fR\\
\fC &\to \epsilon \mid \fU\fC\fR\fC\\
\bC &\to \epsilon \mid \bU\bC\fR\bC\\
\bC^+ &\to \bU\bC\fR\bC
\end{align*}

\noindent We now justify the above rules.

\begin{enumerate}
\item[$\fS$ --] the entire sequence $\fS$ is either a Catalan object $\fC$ (no points on the RHS) or it has a distinguished middle part $\fM$ which is a Catalan object sitting on the diagonal which contains the first point on the RHS. Given the way a 312-avoiding permutation sits in the Dyck path, it follows that $\fM$ also contains the 21 that encloses this first point on the RHS. Such $\fM$ is then followed by $\bC$ allowing points on the RHS.
\item[$\fM$ --] The middle part is either one of the two base cases, or the recursive case. In the base cases, the first point on the RHS occurs in $\fM$ -- either in a corner ($\bU_1\fR$) or not. If not in the corner, then the rule to capture this is $\bU_1\bC^+\fR$ -- the Catalan object must be non-empty as this part sits on the diagonal and so the point on the RHS associated with $\bU_1$ must be enclosed by a 21 on the LHS. If the first point on the RHS occurs in the corner, then we refer to $\fB$, i.e.~the rule is $\fU\fC\fB\bC^+\fR$. Notice that we want the Catalan object after $\fB$ to be non-empty, as $\fB$ only contains $\fR$ steps after a $\bU_1$. Finally, the recursive step is $\fU\fC\fM\bC\fR$ as expected.
\item[$\fB$ --] this part describes the Catalan object containing $\bU_1$ (first up-step which is immediately followed by a point on the RHS) that is in the corner -- i.e.~forms $\bU_1\fR$. Notice that $\fB$ cannot sit on the diagonal, for otherwise the first point on the RHS would not be enclosed in a 21 on the LHS. So the base case for $\fB$ is just a corner with a point on the RHS, and the recursive step is an object $\fB$ preceded by an arbitrary Catalan object, this all offset from the start level. The way $\fB$ is used inside $\fM$ makes it unnecessary to have $\fU\fC\fB\bC\fR$ instead of $\fU\fC\fB\fR$. It would lead to double-counting.
\item[$\fC$ --] Catalan block without any restrictions.
\item[$\bC$ --] Catalan block with arbitrary point sequences allowed on the RHS immediately after every up step on the LHS. 
\item[$\bC^+$ --] Catalan block that is not empty.
\end{enumerate} 

\noindent The equations that follow from the above rules are as follows (in the same order). The functions $s, m, b, \bc, c$ take arguments $z$ and $t$. As before, set $t = 1/(1-z)$ as it represents a (possibly empty) sequence of points on the RHS.
\begin{align*}
s &= c + cm\bc\\
m &= z^2t(\bc -1) + zcb(\bc -1) + zcm\bc\\
b &= zcb + z^2t\\
c &= 1 + c^2z\\
\bc &= 1 + \bc^2zt
\end{align*}

\begin{theorem}
The generating function for the class $\mathbf{C} = \Av(312|21)$ is
\begin{align*}
s_{\mathbf{C}}(z) &= -\frac{(1+\sqrt{1-4z})(1-z+\sqrt{x-1}\sqrt{5x-1})}{4z}
\end{align*}
\end{theorem}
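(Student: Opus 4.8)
The plan is to solve the five functional equations by exploiting their triangular structure: the last two equations determine $c$ and $\bc$ by themselves, and the first three are \emph{linear} in $b$, $m$, $s$ once $c$ and $\bc$ are in hand. First I would solve the two Catalan-type quadratics. From $c = 1 + c^2 z$ one recovers the usual $c = (1 - \sqrt{1-4z})/(2z)$, and from $\bc = 1 + \bc^2 z t$ one gets $\bc = (1 - \sqrt{1 - 4zt})/(2zt)$, in each case taking the branch that is a power series with $c(0) = \bc(0) = 1$. Substituting $t = 1/(1-z)$ turns the second discriminant into $1 - 4zt = (1-5z)/(1-z)$, which is exactly the source of the nested radical $\sqrt{1-5z}/\sqrt{1-z}$ that should appear in the answer.

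Next I would clear the three linear equations in the order $b$, then $m$, then $s$, using two identities that drop straight out of the quadratics: from $c = 1 + c^2 z$ we have $zc^2 = c-1$ and hence $1 - zc = 1/c$, while $\bc = 1 + \bc^2 zt$ gives $zt\bc^2 = \bc - 1$. The equation $b = zcb + z^2 t$ is linear, so $b(1-zc) = z^2 t$ and therefore $b = z^2 t c$. Substituting into the $m$-equation, its right-hand side factors as $(\bc-1)(z^2 t + zcb)$, and since $z^2 t + zcb = b$ this collapses to $m(1 - zc\bc) = (\bc-1)\,z^2 t c$. Finally $s = c + cm\bc = c(1 + m\bc)$.

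Assembling these and eliminating $z^2 t$ via $z^2 t = z(\bc-1)/\bc^2$, together with $zc^2 = c-1$, the $\bc^2$ and $(\bc-1)^2$ contributions cancel and the whole expression collapses to the compact rational form $s = (c - c\bc - 1 + 2\bc)\big/\big(\bc(1 - zc\bc)\big)$. Substituting the explicit $c$ and $\bc$ and rationalizing then produces the stated closed form; the factor $1 + \sqrt{1-4z}$ arises naturally because $1/c = (1+\sqrt{1-4z})/2$.

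The main obstacle is this last step: merging $\sqrt{1-4z}$ with $\sqrt{(1-5z)/(1-z)}$ into a single tidy expression while correctly bookkeeping which branch of each square root is in force, since a wrong branch flips a sign. I would pin the branches down by demanding that the expansion begin $s_{\mathbf C}(z) = 1 + z + 2z^2 + 6z^3 + 22z^4 + \cdots$, matching the initial terms $1,2,6,22,88,\dots$ of the class; this simultaneously fixes every branch choice and checks the algebra.
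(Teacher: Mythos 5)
Your proposal is correct and follows essentially the same route as the paper, which likewise passes from the context-free grammar to the system of functional equations, sets $t=1/(1-z)$, and solves; your explicit elimination (using $1-zc=1/c$ and $zt\bc^2=\bc-1$ to get $b=z^2tc$, then $m$, then $s$) is valid and the resulting rational form reproduces the correct coefficients $1,1,2,6,22,\dots$. The only caveat concerns the very last step: the closed form as printed in the theorem is garbled (it contains an undefined variable $x$ and does not match the series numerically), so ``rationalizing to the stated expression'' cannot literally succeed; your plan to fix branches and verify against the initial terms is exactly the right safeguard and would expose this.
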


The initial twelve coefficients of $s_{\mathbf{C}}(z)$ are 1, 2, 6, 22, 88, 367, 1568, 6810, 29943, 132958, 595227, 2683373. It turns out that this sequence enumerating $\Av(312|21)$ is the sequence \href{http://oeis.org/A165538}{A165538} in \href{http://oeis.org/}{OEIS}. As we mention in Section~\ref{sec:defs}, \href{http://oeis.org/A165538}{A165538} also enumerates the class $\Av(4312,3142)$, see~\cite{albert2012inflations}.

\section{Bijections}
\label{sec:bijections}
Before we describe bijections between the juxtaposition classes, let us introduce the notion of an \emph{articulation point} in 231-avoiders and 321-avoiders in order to define a canonical bijection between the two classes (in which the articulation point is fixed).

\begin{figure}[h!]
\centering
\begin{subfigure}{0.45\textwidth}
\begin{center}
\begin{tikzpicture}[scale=0.3]
\dyck{0,0}{14}{black}{0,1,0,0,1,0,0,1,1,1,0,1,0,0,0,1,1,0,1,0,0,1}
\pdyck{12,10}{Orange}{1,0,0,1,1,1}
\draw[black] (12,10) circle (10pt);
\filldraw[black] (13.5,11.5) circle (6pt);
\filldraw[black] (11.5,9.5) circle (6pt);
\filldraw[black] (9.5,8.5) circle (6pt);
\filldraw[black] (8.5,6.5) circle (6pt);
\filldraw[black] (5.5,5.5) circle (6pt);
\filldraw[black] (4.5,2.5) circle (6pt);
\filldraw[black] (2.5,1.5) circle (6pt);
\filldraw[black] (.5,.5) circle (6pt);
\filldraw[black] (3.5,3.5) circle (6pt);
\filldraw[black] (1.5,4.5) circle (6pt);
\filldraw[black] (6.5,13.5) circle (6pt);
\filldraw[black] (7.5,7.5) circle (6pt);
\filldraw[black] (10.5,10.5) circle (6pt);
\filldraw[black] (12.5,12.5) circle (6pt);
\end{tikzpicture}
\end{center}
\caption{\small 231-avoider.}
\label{fig:articulation231}
\end{subfigure}\begin{subfigure}{0.45\textwidth}
\begin{center}
\begin{tikzpicture}[scale=0.3]
\dyck{0,0}{14}{black}{0,1,0,0,1,0,0,1,1,1,0,1,0,0,0,1,1,0,1,0,0,1}
\pdyck{12,10}{Orange}{0,1,1,1,0,1}
\draw[black] (12,10) circle (10pt);
\filldraw[black] (13.5,13.5) circle (6pt);
\filldraw[black] (12.5,10.5) circle (6pt);
\filldraw[black] (11.5,9.5) circle (6pt);
\filldraw[black] (9.5,8.5) circle (6pt);
\filldraw[black] (8.5,6.5) circle (6pt);
\filldraw[black] (5.5,5.5) circle (6pt);
\filldraw[black] (4.5,2.5) circle (6pt);
\filldraw[black] (2.5,1.5) circle (6pt);
\filldraw[black] (.5,.5) circle (6pt);
\filldraw[black] (1.5,3.5) circle (6pt);
\filldraw[black] (3.5,4.5) circle (6pt);
\filldraw[black] (6.5,7.5) circle (6pt);
\filldraw[black] (7.5,11.5) circle (6pt);
\filldraw[black] (10.5,12.5) circle (6pt);
\end{tikzpicture}
\end{center}
\caption{\small 321-avoider.}
\label{fig:articulation321}
\end{subfigure}
\caption{\small Articulation points in a 231-avoider and 321-avoider (left and right, respectively) at the position $(12,10)$, marked by empty circles in both pictures.}
\label{fig:theta}
\end{figure}
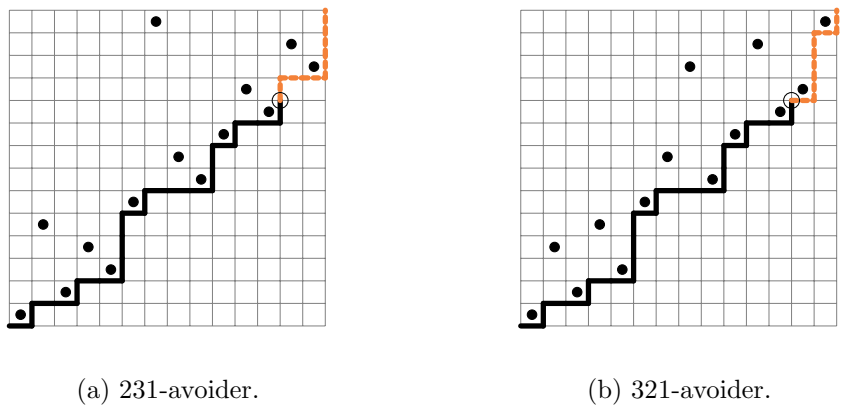

Let $P$ be a $231$-avoider and $\P$ the corresponding Dyck path below the diagonal from the top-right corner to the bottom-left corner. The articulation point $x$ on $\P$ is any and all of the following.
\begin{itemize}
\item The end-point of the first down step after a left-step has occured in $\P$.
\item The intersection of $\P$ and the gridline directly below the ``1'' in the topmost copy of 12 in $P$.
\end{itemize}
The two descriptions are readily seen equivalent from Figure~\ref{fig:articulation231} where $P$ and $\P$ are drawn into the same picture. It follows that an articulation point $x$ partitions $\P$ into $\P'$ (from the top to $x$) and $\P''$ (from $x$ to the bottom). Notice that $\P'$ is unique in the sense that it is the very part of a Dyck path that for which $x$ is an articulation point. In other words, there is one and only one $\P'$ that completes $\P''$ into $\P$. To be exact, $\P'$ must end with a $\fD$ (down step) and this $\fD$ must be the first $\fD$ after an $\fL$ occured. Hence, $\P' = \fX\fD$, where $\fX$ is a string of consecutive $\fD$s followed by consecutive $\fL$s.

Let $Q$ be a $321$-avoider and $\Q$ its Dyck path below the diagonal from the top-right corner to the bottom-left corner. Then the articulation point $y$ on $\Q$ is defined as any/all of the following.
\begin{itemize}
\item It is the end-point of the first left step that does not end on the diagonal.
\item It is the bottom left corner of the grid box of ``1'' in the topmost 21 in $Q$ which is simultaneously also a point on $\Q$.
\end{itemize}
Again, example is in Figure~\ref{fig:articulation321}. Therefore, $y$ partitions $\Q$ into $\Q'$ (from the top to $y$) and $\Q''$ (from $y$ to the bottom). Given $\Q''$, there is a unique $\Q'$ that completes it into $\Q$. Indeed, $\Q'$ must end with an $\fL$. It must start with a sequence of $\fD\fL$ pairs, followed by a sequence of $\fD$s until the final letter $\fL$.

By an articulation point of a permutation we mean the articulation point of the corresponding Dyck path. Next lemma tells us that we can pass between the two classes, $\Av(231)$ and $\Av(321)$, and keep the articulation point fixed.

\begin{lemma}
\label{lem:simplebijection}
There exists a bijective map $\lambda: \Av(231) \to \Av(321)$ that fixes articulation points.
\end{lemma}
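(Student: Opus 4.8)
The plan is to build $\lambda$ by cutting each Dyck path at its articulation point, transplanting the lower portion unchanged, and rewriting only the upper portion into its $321$ counterpart. Let $P\in\Av(231)$ have path $\P$ with articulation point $x$, and write $\P=\P'\P''$ as above, so $\P'=\fD^{a}\fL^{b}\fD$ and $\P''$ runs from $x$ to the bottom-left corner. Because $\P$ stays weakly below the diagonal throughout $\fD^{a}\fL^{b}$, the left steps force $b\le a$; reading off coordinates, $x=(n-b,n-a-1)$ sits at depth $a+1-b\ge 1$ below the diagonal. I will exploit the fact that, for a \emph{given} articulation position, both the $231$-top and the $321$-top are uniquely determined, so the only data that genuinely needs transporting is the tail $\P''$.

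First I would define the image. Map $\P'=\fD^{a}\fL^{b}\fD$ to $\Q'=(\fD\fL)^{b-1}\fD^{a-b+2}\fL$ and let $\lambda(P)=Q$ be the $321$-avoider whose path is $\Q=\Q'\P''$. A short count shows $\Q'$ has $a+1$ down steps and $b$ left steps, hence ends at the same lattice point $x$ as $\P'$; therefore $\Q$ has the right length and reuses the identical tail $\P''$ from the identical starting point $x$, so it stays weakly below the diagonal exactly where $\P$ did and is a genuine Dyck path. To see that $x$ is the $321$-articulation point of $\Q$, note the pairs $(\fD\fL)^{b-1}$ contribute only left steps returning to the diagonal, whereas the terminal $\fL$ of $\Q'$ lands at $x$, at depth $a-b+1\ge 1$; thus it is the first left step of $\Q$ that does not end on the diagonal, so $x$ is preserved. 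Matching against the recalled form $(\fD\fL)^{c}\fD^{d}\fL$ of a $321$-top gives $c=b-1$ and $d=a-b+2$, with $d\ge 2$.

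Bijectivity would then follow from the mirror decomposition. Given $Q\in\Av(321)$ with path $\Q'\Q''$ and $\Q'=(\fD\fL)^{c}\fD^{d}\fL$, the terminal $\fL$ leaves the diagonal precisely when $d\ge 2$, which is exactly the condition for $x$ to be an articulation point; setting $a=c+d-1$ and $b=c+1$ yields $a\ge b$, and $\fD^{a}\fL^{b}\fD\,\Q''$ is a legitimate $231$-path. The assignments $(a,b)\leftrightarrow(c,d)$ are mutually inverse, so $\lambda$ is a bijection between the $231$-avoiders and $321$-avoiders that possess an articulation point. The only permutations without one are the decreasing permutation in $\Av(231)$ (path $\fD^{n}\fL^{n}$, avoiding $12$) and the increasing permutation in $\Av(321)$ (path $(\fD\fL)^{n}$, avoiding $21$); I would map this pair to each other, which vacuously preserves the absent articulation point and completes the bijection.

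The step I expect to be the main obstacle is articulation-preservation, i.e.\ confirming that after transplanting $\P''$ the designated junction is still the \emph{first} occurrence of the defining event --- the first down step following a left step for $231$, the first off-diagonal left step for $321$. Everything hinges on the inequality $b\le a$ (equivalently $d\ge 2$): it is what guarantees that the junction left step of $\Q'$ genuinely departs from the diagonal and that no earlier left step does. A secondary point needing an explicit (if short) argument is that copying $\P''$ verbatim never breaks the below-diagonal condition, which holds because $x$ carries the same coordinates and depth in both $\P$ and $\Q$.
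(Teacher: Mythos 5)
Your proposal is correct and follows essentially the same route as the paper's proof: fix the articulation point $x$, transplant the tail $\P''$ verbatim, and exchange the uniquely determined heads $\P'=\fD^{a}\fL^{b}\fD$ and $\Q'=(\fD\fL)^{b-1}\fD^{a-b+2}\fL$. You merely make explicit what the paper leaves implicit --- the formulas for the heads, the check $b\le a$ (equivalently $d\ge 2$) that preserves the articulation point, and the edge case of the decreasing/increasing permutations with no articulation point --- all of which is consistent with, and slightly more careful than, the published argument.
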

\begin{proof}
Given the position of an articulation point $x$, let $\P''$ and $\Q''$ be identical. Then there are unique completions $\P'$ and $\Q'$, of $\P''$ and $\Q''$, that make them into Dyck paths $\P$ and $Q$, respectively. Clearly, this is a bijection, say $\lambda'$, between Dyck paths $\P$ with an articulation point $x$ and Dyck paths $\Q$ with the same articulation point $x$. Given the bijections between 231-avoiders and Dyck paths, and 321-avoiders and Dyck paths from Section~\ref{sec:defs}, we can define $\lambda$ as a composition of these with $\lambda'$. This makes $\lambda$ a bijection as desired.
\begin{align*}
\Av(231) \to \text{Dyck}(\Av(231)) \overset{\lambda'}{\to} \text{Dyck}(\Av(321))\to \Av(321).
\end{align*}
\end{proof}

For later use, we define two more notions. Firstly, when we write $P = P_1P_2$ we mean that $P_1$ and $P_2$ to be the permutations on the LHS and the RHS of the gridding of $P$ that maximizes the size of $P_1$, i.e.~if $P_1$ contained the first point of $P_2$, it would not be an $abc$-avoider. In particular, note the interplay of an articulation point and the vertical gridline. If the gridline was more to the left, then the (possibly different) articulation point would be below the first point on the RHS. Secondly, assume the down steps (vertical steps) in a Dyck path $\P$ corresponding to a permutation $P_1$ are labelled 1 through $n$ in order of their appearance in $\P$. Then we can describe the permutation $P_2$ on the RHS of the gridding by a sequence $h$ of length $n$. Given $h = (h_1,\ldots,h_n)$, $h_i$ is the length of the sequence of points associated with the $i$-th down step $D_i$. Depending on our choice (see Section~\ref{sec:enum}), these sequences of $h_i$ points are placed either immediately above or immediately below the point of the permutation corresponding to $D_i$. In any case, such a sequence $h$ together with a Dyck path $\P$ uniquely describe a permutation from one of the juxtaposition classes $\Av(abc|xy)$. Therefore, we can refer to a $P = P_1P_2 \in \Av(231|21)$ as a pair $(P,h_P)$ (or alternatively $(P_1,h_P)$) for the appropriate sequence $h_P$.

Theorem~\ref{thm:theta} and~\ref{thm:psi} both make use of the bijection $\lambda$ from Lemma~\ref{lem:simplebijection}. 
\begin{theorem}
\label{thm:theta}
There exists a bijection $\theta: \Av(231|12) \rightarrow \Av(321|12)$. 
\end{theorem}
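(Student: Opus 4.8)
The plan is to define $\theta$ directly on the pair representation introduced just above the statement. Writing $P = (P_1,h_P)$ with $P_1 \in \Av(231)$ the left factor and $h_P$ the sequence that records the decreasing right-hand side, I would set $\theta(P) = (\lambda(P_1),h_P)$, where $\lambda$ is the articulation-point-preserving bijection of Lemma~\ref{lem:simplebijection}. In words: replace the $231$-avoiding left factor by the $321$-avoider $\lambda(P_1)$ and re-attach the identical decreasing sequence on the right, using the very same attachment data $h_P$ (this makes sense because both right-hand sides are $12$-avoiders, i.e.\ decreasing). Bijectivity is then forced by the bijectivity of $\lambda$, with inverse $\theta^{-1}(Q_1,h) = (\lambda^{-1}(Q_1),h)$; the real work is checking that $\theta(P)$ is a legitimate element of $\Av(321|12)$ in its maximal gridding.

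The first step I would carry out is to pin down where $h_P$ can be nonzero. Because the gridding is chosen to push the vertical line as far right as possible, the leftmost point on the RHS sits directly below the articulation point of $\P$, and every further RHS point lies below it. Concretely this means every nonzero entry of $h_P$ is indexed by a down step at or below the articulation point, that is, by the articulation step together with the down steps of $\P''$; no RHS data is recorded in the interior of $\P'$.

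The heart of the argument is then the observation that $\lambda$ leaves $\P'' = \Q''$ untouched and fixes the articulation point, rewriting only the completion $\P'$ into $\Q'$. Since the admissible attachment heights for RHS points are exactly those determined by the articulation level and by the shared suffix $\P'' = \Q''$, the decreasing sequence encoded by $h_P$ occupies an identical set of rows in $\Q$, and its enclosures and relative order (governed entirely by $\P'' = \Q''$) carry over unchanged. Two things then remain to verify: that the reconstructed right factor is still a $12$-avoider (immediate, since the heights and their order are preserved) and that the gridding remains maximal, i.e.\ no RHS point can be pulled into $\lambda(P_1)$ to enlarge the $321$-avoider. The latter holds because the articulation point of $\Q$ lies directly below the ``$1$'' of the topmost $21$ of $\lambda(P_1)$, exactly as it lay below the ``$1$'' of the topmost $12$ of $P_1$; pulling the first RHS point left would create a forbidden pattern on the left in precisely the same way.

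I expect the main obstacle to be the bookkeeping in this middle step, namely reconciling the two different Dyck-path-to-permutation bijections so that ``the same'' decreasing sequence really does land in matching cells. The delicate point is the first point on the RHS: it is attached at the articulation step, which is the \emph{last} step of $\P'$ (a down step) but is not a step of $\Q'$ at all, since $\Q'$ ends in a left step. To handle this cleanly I would phrase the entire attachment in terms of heights relative to the fixed articulation level rather than in terms of labelled down steps, so that the identity $\P'' = \Q''$ together with the common articulation point immediately yields that every RHS row is the same on both sides.
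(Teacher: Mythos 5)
Your proposal is correct and follows essentially the same route as the paper: both define $\theta(P_1,h_P) = (\lambda(P_1),h_P)$ using the articulation-point-preserving bijection $\lambda$ of Lemma~\ref{lem:simplebijection}, and both reduce the verification to checking that the image lies in $\Av(321|12)$ with the gridline still pushed maximally to the right. Your extra bookkeeping about where the nonzero entries of $h_P$ sit and how the first RHS point attaches near the articulation level is a more careful spelling-out of details the paper leaves implicit, not a different argument.
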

\begin{proof}
For visual aid, see Figure~\ref{fig:theta}. Let $(P,h_P)$ be a permutation from $\Av(231|12)$. We define $\theta$ to be $\lambda$ when restricted to $P_1$ and to be an identity map on $h_P$. Provided the gridding is unique for every $P \in \Av(231)$ and the resulting object is a permutation from $\Av(321|12)$, $\theta$ is a bijection. Every $\Av(231|12)$ griddable permutation has exactly one gridding of our choice -- pushing the gridline as right as possible. Given that $\lambda$ fixes the articulation point of $P$ and $h_P$ is unchanged under $\theta$, the gridline in the image of $\theta$ is as far right as possible -- making the gridding unique. Hence, $\theta: (P,h_P) \mapsto (\lambda(P),h_P)$ is a bijection from $\Av(231|12)$ to $\Av(321|12)$. 
\end{proof}

The next bijection acts in an analogous way to $\theta$, except this time the sequence on the RHS is increasing. Therefore, we additionally need to show that there always is a point on the RHS below an articulation point.

\begin{theorem}
\label{thm:psi}
There exists a bijection $\psi: \Av(231|21) \rightarrow \Av(321|21)$. 
\end{theorem}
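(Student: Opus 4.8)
The plan is to define $\psi$ in exact analogy with the map $\theta$ of Theorem~\ref{thm:theta}: for a maximally gridded $(P,h_P)\in\Av(231|21)$ I apply the articulation-point-preserving bijection $\lambda$ of Lemma~\ref{lem:simplebijection} to the left factor and leave the decorating sequence $h_P$ untouched, setting $\psi(P,h_P)=(\lambda(P),h_P)$. Since $\lambda$ is already a bijection $\Av(231)\to\Av(321)$ that fixes articulation points, the whole task is to verify that this operation sends the \emph{maximal} gridding of an element of $\Av(231|21)$ to the maximal gridding of an element of $\Av(321|21)$, and that it is reversible. The only genuinely new feature relative to $\theta$ is that the right-hand factor now avoids $21$, so it is increasing rather than decreasing; as the remark preceding the statement signals, this forces me to prove separately that in a maximal gridding there is always a point on the RHS below the articulation point.

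First I would characterise maximality of the gridding in the increasing setting. Write $r$ for the first point on the RHS, which is now the leftmost and hence the \emph{smallest}-valued RHS point. Maximality says exactly that shifting the gridline one point to the right, so as to absorb $r$ into the left factor, creates a forbidden pattern there; in both classes $r$ becomes the final and smallest entry of that pattern, so maximality is equivalent to the existence of an ascending pair (a copy of $12$) above $r$ on the $231$-side, and of a descending pair (a copy of $21$) above $r$ on the $321$-side. The decisive observation is that above the articulation point the left factor is monotone: for a $231$-avoider, any $12$ lying wholly above the articulation point would have its lower endpoint above that of the topmost $12$, which is impossible by definition of the articulation point, so the part of the path above the articulation point is a decreasing run and carries no $12$ at all (symmetrically, the $321$-avoider is increasing above its articulation point and carries no $21$). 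Consequently the witnessing pair must have its lower endpoint at or below the articulation point, which pins $r$ below the articulation point; conversely, if $r$ lies below the articulation point then $r$ together with the topmost $12$ (resp.\ $21$) realises the forbidden pattern. Thus, in both $\Av(231|21)$ and $\Av(321|21)$, a gridding with nonempty RHS is maximal if and only if its first RHS point lies below the articulation point, which is precisely the claim that there is always an RHS point below the articulation point.

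With this equivalence in hand the transfer is routine. Splitting $\P$ at the articulation point $x$ into the upper part $\P'$ (which $\lambda$ re-routes while fixing its endpoints, hence $x$ itself) and the lower part $\P''$ below $x$ (which $\lambda$ leaves untouched), I note that $\lambda$ preserves the total number of down steps, so carrying $h_P$ across by index is meaningful, and that every decoration sitting below $x$ lives entirely on $\P''$. Because down-step heights decrease along the path, any decoration yields an increasing RHS automatically, so $(\lambda(P),h_P)$ is again a legitimately decorated $\Av(321|21)$ object; and since the decorations below $x$ are unchanged, its lowest RHS point is still below $x$, so by the equivalence above it is maximally gridded. Running $\lambda^{-1}$ on the left factor while retaining $h_P$ inverts $\psi$, so $\psi$ is a bijection.

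I expect the middle step to be the main obstacle: establishing that the first RHS point lies below the articulation point, rather than merely below some lower copy of $12$ (resp.\ $21$), is exactly where the increasing RHS behaves differently from the decreasing RHS handled by $\theta$. The crux is the monotonicity of the left factor above the articulation point, which rules out the degenerate possibility --- available only because the sequence is increasing --- that every RHS point floats above the articulation point while the gridline fails to be maximal.
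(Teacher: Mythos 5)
Your proposal is correct and follows essentially the same route as the paper: $\psi$ is $\lambda$ on the left factor and the identity on $h_P$, and the key point is that maximality of the gridding is preserved because $\lambda$ fixes the articulation point and the relative position of the first RHS point. You in fact supply more detail than the paper does, by explicitly proving that maximality of the gridding is equivalent to the first RHS point lying below the articulation point (via monotonicity of the left factor above that point), a step the paper leaves implicit.
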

\begin{proof}
Since the LHS classes in these juxtapositions are the same as those in Theorem~\ref{thm:theta}, the above proof carries over to this case as it is. One only needs to notice, additionally, that $\theta$ (and hence $\psi$) fixes the articulation point and acts as identity on the RHS sequence of points. This preserves the relative vertical position of the articulation point and any point on the RHS. Therefore, if there is a point on the RHS below the articulation point in a $P \in \Av(231|21)$, then there is a point on the RHS and below the articulation point of $\psi(P) \in \Av(321|21)$. 
\end{proof}

Unlike the bijections $\theta$ and $\psi$, the last bijection $\phi$ does not make use of Lemma~\ref{lem:simplebijection}. Instead, we reshuffle excursions in the Dyck paths and reverse the order of up steps in the middle excursion $\fM$. 

\begin{theorem}
\label{thm:phi}
There exists a bijection $\phi: \Av(312|21) \to \Av(312|12)$. 
\end{theorem}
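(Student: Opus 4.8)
The plan is to argue entirely within the decorated Dyck path representation of Section~\ref{sec:classC}, exploiting the fact that $\Av(312|21)$ and $\Av(312|12)$ share the same left-hand class $\Av(312)$ and differ only in the monotonicity of the right-hand sequence: for $|21$ the points on the RHS read left to right are increasing, whereas for $|12$ they are decreasing. So I would represent $P \in \Av(312|21)$ as a pair $(P_1,h_P)$, where $P_1 \in \Av(312)$ is encoded by a Dyck path $\P$ (bottom-left to top-right, staying above the diagonal) and $h_P$ records the lengths of the point sequences hung immediately above the up steps. Using the grammar decomposition $\fS \to \fC\fM\bC$, I would split $\P$ into the initial point-free Catalan block $\fC$, the middle block $\fM$ carrying the first point on the RHS together with the $21$ on the LHS that encloses it, and the trailing block $\bC$ carrying the remaining points.

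The map $\phi$ is then defined as advertised: retain the decomposition $\fC\fM\bC$, reverse the order of the up steps inside the middle block $\fM$, and reshuffle the excursions so that the enclosing $21$-structure binding the first RHS point is transported from the ``smallest point'' configuration (forced by the increasing RHS) to the ``largest point'' configuration (forced by the decreasing RHS). Reversing the up steps inside $\fM$ reflects the vertical order of the points it carries, which is precisely the operation turning a locally increasing block of RHS points into a locally decreasing one; reshuffling the surrounding Catalan blocks then re-sorts the excursions so that the global RHS sequence becomes monotone decreasing while every RHS point remains enclosed by the same pair of LHS points, read in the opposite order. The decoration $h_P$ is carried along, reversed in the corresponding positions, to produce the target sequence $h_{\phi(P)}$.

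I would then verify three things. First, well-definedness on the LHS: reversing the up-step order inside a single excursion, and reshuffling excursions, both send a $312$-avoiding Dyck path to a $312$-avoiding Dyck path; this is a short pattern-avoidance check, since these operations are symmetries of the set of excursions of a $312$-avoider. Second, that the image lands in $\Av(312|12)$ with the gridline pushed as far right as possible: I must check that after the transformation the first point on the RHS is still enclosed by a $21$ on the LHS (so the gridline cannot move further right) and that the full RHS sequence is now decreasing. This is exactly what the reversal of $\fM$ and the reshuffle are engineered to guarantee, and it certifies that the resulting gridding is the unique maximal one, so that $\phi(P)$ is a genuine element of $\Av(312|12)$ rather than merely a gridded object. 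Third, bijectivity: since reversing up steps is an involution and the reshuffle is reconstructible from the positions of the excursions and of the block $\fM$, I would exhibit the inverse as the same kind of operation applied to a $\Av(312|12)$ object, reading its decreasing RHS back into an increasing one.

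The main obstacle will be the second verification: controlling the interaction between the ``gridline as far right as possible'' condition and the reshuffle, that is, proving that after reversing $\fM$ and permuting the excursions the first RHS point is again enclosed by a $21$ on the LHS and that no earlier point could be absorbed into the LHS. This is where the asymmetry between the increasing and decreasing cases — which point of the enclosing $21$ is the ``active'' one — must be tracked carefully to confirm that $\phi$ outputs exactly the maximal gridding and that the construction neither loses nor double-counts permutations. I expect the cleanest route to be an argument that $\phi$ respects the grammar decompositions of the two classes block by block, matching $\fC$, $\fM$, and $\bC$ of the source with the corresponding parts of the (mirrored) grammar for $\Av(312|12)$.
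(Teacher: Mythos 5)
Your proposal follows essentially the same route as the paper: represent an element of $\Av(312|21)$ as a decorated Dyck path, single out the middle excursion $\fM$ carrying the first RHS point, reverse the order of the up-step labels within it, and permute the surrounding excursions (the paper uses the specific reordering $\P_{i+1}\oplus\cdots\oplus\P_n\oplus\Q_i\oplus\P_1\oplus\cdots\oplus\P_{i-1}$) so that the decoration becomes decreasing while reversibility is preserved. Your plan is correct in outline and even flags explicitly the maximal-gridding verification that the paper dispatches with ``it is easy to check.''
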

\begin{proof}
The idea of this bijection is to ``reverse'' the Dyck path on the LHS of the juxtaposition in some sensible way --- this also leads to the sequence $h_P$ being reordered. Provided this is done in a reversible way, we are done. Let $P \in \Av(312|12)$ be $P = P_1P_2$. Let $P_1$ be represented by a Dyck path $\P$ and let $\P_1 \oplus \cdots \oplus \P_k$, $k\geq 1$, be the \emph{decomposition of $\P$ into excursions}. Recall from Section~\ref{sec:classC} the way $\Av(312|21)$ was enumerated. One of the excursions in $\P$, say $\P_i$, was described by part $\fM$ in the CFG. Construct a new Dyck path $\Q'$ from $\P$ by setting $\Q' = \P_{i+1}\oplus \cdots \oplus \P_{n} \oplus \P_i \oplus \P_1\oplus \cdots \oplus \P_{i-1}$. Recall that the part $\P_i$ is of the form $\fU\ldots\bU_1\ldots \fR$. We can, without interfering with the structure of the excursion $\P_i$ write the names of the up steps in the opposite direction to obtain $\Q_i$: a letter from $\{\fU, \bU_1, \bU\}$ at position $\ell$ in $\P_i$ will assume position $|\P_i|-\ell$ in the new $\Q_i$. This way, up steps and down staps remain preserved (the shapes of $\P_i$ and $\Q_i$ are the same). Given that $\P_i$ (hence also $\Q_i$) is an excursion, there is a 21 in $P_1$ to enclose the lowest (in the case of $\Q_i$ the highest) point on the RHS. Given the transformation from $\P$ to $\Q = \P_{i+1}\oplus \cdots \oplus \P_{n} \oplus \Q_i \oplus \P_1\oplus \cdots \oplus \P_{i-1}$, vertical steps in $\Q$ are in a different order than they originally were in $\P$. This results in the corresponding sequence $h_Q$ being a respective reshuffle of $h_P$. It is easy to check that the permutation $(Q,h_Q)$ is in the juxtaposition class $\Av(312|12)$. Given that the above transformation from $\Av(312|21)$ to $\Av(312|12)$ is unambiguous (i.e.~reversible), we conclude that 
$$\P_1 \oplus \cdots \oplus \P_k \mapsto \P_{i+1}\oplus \cdots \oplus \P_{n} \oplus \Q_i \oplus \P_1\oplus \cdots \oplus \P_{i-1}$$
gives rise to the bijection $\phi: \Av(312|21) \to \Av(312|12)$ such that $\phi: (P,h_P) \mapsto (Q,h_Q)$.
\end{proof}

Theorems~\ref{thm:theta}, \ref{thm:psi} and \ref{thm:phi} complete the information in Table~\ref{tab:bijections}.

\section{Conclusion}
\label{sec:conclusion}

Table~\ref{tab:bijections} enumerates the ``simplest'' grid classes that are not monotone. The next step could see Catalan class replaced by a more complicated one, or increased number of cells. If exact enumeration is beyond reach, it would be interesting to obtain information about the generating functions of such grid classes. For instance, consider a grid class formed by taking a monotone geometric grid class and replacing one cell by a Catalan class. Does every such grid class have an algebraic generating function? Can we say anything about generating functions of similar grid classes with a non-Catalan cell?

A problem not closely related to the exploration of grid classes, but interesting nevertheless, is finding a bijection between the juxtaposition class $\mathbf{C} = \Av(312|21)$ and the class $\Av(4312, 3142)$. They happen to be enumerated by the same sequence \href{http://oeis.org/A165538}{A165538} on \href{http://oeis.org}{OEIS} as we repeatedly mentioned before. See Albert, Atkinson and Vatter~\cite{albert2012inflations} for their enumeration of $\Av(4312, 3142)$.


\bibliographystyle{alpha}
\bibliography{juxt} 

\newcommand{\etalchar}[1]{$^{#1}$}
\begin{thebibliography}{AAB{\etalchar{+}}13}

\bibitem[AAB11]{albert2011enumeration}
M.~H. Albert, M.~D. Atkinson, and R.~Brignall.
\newblock The enumeration of permutations avoiding $2143$ and $4231$.
\newblock {\em Pure Mathematics and Applications}, 22:87--98, 2011.

\bibitem[AAB{\etalchar{+}}13]{aabrv2013}
M.~H. Albert, M.~D. Atkinson, M.~Bouvel, N.~Ru{\v{s}}kuc, and V.~Vatter.
\newblock Geometric grid classes of permutations.
\newblock {\em Transactions of the American Mathematical Society},
  365(11):5859--5881, 2013.

\bibitem[AAV14]{albert2012inflations}
M.~H. Albert, M.~D. Atkinson, and V.~Vatter.
\newblock Inflations of geometric grid classes: three case studies.
\newblock {\em Australasian Journal of Combinatorics}, 58(1):27--47, 2014.

\bibitem[AB16]{albert-brignall-2times2}
M.~H. Albert and R.~Brignall.
\newblock $2\times 2$ monotone grid classes are finitely based.
\newblock {\em Discrete Mathematics and Theoretical Computer Science}, 18(2),
  Permutation Patterns 2015, 2016.

\bibitem[ABRV16]{albert-new}
M.~H. Albert, R.~Brignall, N.~Ru\v{s}kuc, and V.~Vatter.
\newblock Rationality for subclasses of 321-avoiding permutations.
\newblock preprint,
  \href{https://arxiv.org/abs/1610.01908}{\texttt{arXiv:1602.00672}}, 2016.

\bibitem[Alb12]{albertpermlab}
M.~H. Albert.
\newblock {P}erm{L}ab: Software for permutation patterns.
\newblock \url{http://www.cs.otago.ac.nz/PermLab}, 2012.

\bibitem[Atk99]{atkinson1997restricted}
M.~D. Atkinson.
\newblock Restricted permutations.
\newblock {\em Discrete Mathematics}, 195(1):27--38, 1999.

\bibitem[Bev15a]{bevan:growth-rates:}
D.~I. Bevan.
\newblock Growth rates of permutation grid classes, tours on graphs, and the
  spectral radius.
\newblock {\em Transactions of the American Mathematical Society},
  367(8):5863--5889, 2015.

\bibitem[Bev15b]{bevan2015thesis}
D.~I. Bevan.
\newblock {\em On the growth of permutation classes}.
\newblock PhD thesis, The Open University, 2015.

\bibitem[Bev15c]{bevan2015defs}
D.~I. Bevan.
\newblock Permutation patterns: basic definitions and notation.
\newblock preprint,
  \href{http://arxiv.org/abs/1506.06673}{\texttt{arXiv:1506.06673}}, 2015.

\bibitem[Bev16]{bevan-new}
D.~I. Bevan.
\newblock The permutation class {A}v(4213,2143).
\newblock preprint,
  \href{http://arxiv.org/abs/1510.06328}{\texttt{arXiv:1510.06328}}, 2016.

\bibitem[Bri12]{brignall2012pwo}
R.~Brignall.
\newblock Grid classes and partial well order.
\newblock {\em Journal of Combinatorial Theory. Series A}, 119(1):99--116,
  2012.

\bibitem[HMU01]{hopcroft2001automata}
J.~E. Hopcroft, R.~Motwani, and J.~D. Ullman.
\newblock {\em Introduction to automata theory, languages, and computation}.
\newblock Addison-Wesley Publishing Co., Reading, Mass., 2nd edition, 2001.

\bibitem[Inc]{oeis}
The OEIS~Foundation Inc.
\newblock The {O}n-{L}ine {E}ncyclopedia of {I}nteger {S}equences.
\newblock \url{http://oeis.org}.

\bibitem[Min16]{miner16twobyfour}
S.~Miner.
\newblock Enumeration of several two-by-four classes.
\newblock preprint,
  \href{https://arxiv.org/abs/1610.01908}{\texttt{arXiv:1610.01908}}, 2016.

\bibitem[MV03]{murphy2003pwo}
M.~M. Murphy and V.~Vatter.
\newblock Profile classes and partial well-order for permutations.
\newblock {\em Electronic Journal of Combinatorics}, 9(2), 2003.

\bibitem[Vat11]{vatter11small}
V.~Vatter.
\newblock Small permutation classes.
\newblock {\em Proceedings of the London Mathematical Society (3)},
  103:879--921, 2011.

\bibitem[VW11]{vatter2011pwo}
V.~Vatter and S.~Waton.
\newblock On partial well-order for monotone grid classes of permutations.
\newblock {\em Order}, 28:193--199, 2011.

\bibitem[Wat07]{waton}
S.~Waton.
\newblock {\em On permutation classes defined by token passing networks,
  gridding matrices and pictures: Three flavours of involvement}.
\newblock PhD thesis, University of St Andrews, 2007.

\end{thebibliography}
\end{document}